\newtheorem{remark}{Remark}
\newtheorem{Prop}{Proposition}
\newcommand{\E}[1]{\mathop{{\rm \bf E}\left\{#1\right\}}\nolimits}
\newcommand{\var}[1]{\mathop{{\rm \bf var}\!\left\{#1\right\}}\nolimits}
\DeclareMathAlphabet\mathbfcal{OMS}{cmsy}{b}{n}
\begin{document}

\begin{frontmatter}
\runtitle{SVD-based Cubature Kalman Filtering}  

\title{SVD-based factored-form Cubature Kalman Filtering for continuous-time stochastic systems with discrete measurements} 

\thanks[footnoteinfo]{This paper was not presented at any IFAC
meeting. Corresponding author M.~V.~Kulikova.}

\author[IST]{Maria V. Kulikova}\ead{maria.kulikova@ist.utl.pt},
\author[IST]{Gennady Yu. Kulikov}\ead{gkulikov@math.ist.utl.pt}

\address[IST]{CEMAT, Instituto Superior T\'{e}cnico, Universidade de Lisboa,
          Av. Rovisco Pais 1,  1049-001 Lisboa, Portugal}

\begin{keyword}                           
Stochastic systems; Estimation algorithms; Kalman filters; Nonlinear filters.               
\end{keyword}                             

\begin{abstract}
In this paper, a singular value decomposition (SVD) approach is developed for implementing the cubature Kalman filter. The discussed estimator is one of the most popular and widely used method for solving nonlinear Bayesian filtering problem in practice. To improve its numerical stability (with respect to roundoff errors) and practical reliability of computations, the SVD-based methodology recently proposed for the classical Kalman filter is generalized on the nonlinear filtering problem. More precisely, we suggest the SVD-based solution for the continuous-discrete cubature Kalman filter and design two estimators: (i) the filter based on the traditionally used Euler-Maruyama discretization scheme; (ii) the estimator based on advanced It\^{o}-Taylor expansion for discretizing the underlying stochastic differential equations. Both estimators are formulated in terms of SVD factors of the filter error covariance matrix and belong to the class of stable factored-form (square-root) algorithms. The new methods are tested on a radar tracking problem.
\end{abstract}

\end{frontmatter}

\section{Introduction}\label{introduction}

This paper deals with the nonlinear Bayesian filtering problem, i.e. with the methods developed for estimating the unknown dynamic state of nonlinear continuous-discrete stochastic systems. More precisely, we explore the {\it local} approach discussed in~\cite{2009:Haykin} where
the nonlinear filters are designed by fixing the posterior density to take {\it a priori} form (it is typically assumed to be Gaussian). This makes their derivation simple and the resulted estimators are fast to execute, although they yield the sub-optimal solution to the nonlinear Bayesian filtering problem.
However, the attractive simplicity of local filters, their good estimation quality and practical feasibility make these estimators are the methods of choice for solving real-time practical problems. Among such estimators, the most popular ones are the extended Kalman filter (EKF)~\cite{1964:Ho,1970:Jazwinski:book}, the divided difference filter (DD KF)~\cite{2000:Norgaard}, the unscented Kalman filter (UKF)~\cite{2000:Julier,2004:Julier,2015:Menegaz}, the quadrature Kalman filter (QKF)~\cite{2007:Arasaratnam,2000:Ito} and the cubature Kalman filter (CKF)~\cite{2009:Haykin,2010:Haykin,2018:Haykin,2014:Wang:CKF} as well as their adaptive versions~\cite{2016:Wang:TAC}. All mentioned methods have the Kalman filtering (KF)-like structure that is constituted by a recursive computation of the first two moments (i.e. the mean and covariance), only. In~\cite{2009:Haykin} this estimation strategy is called the `nonlinear filtering through linear estimation theory'. Nowadays, the sampled-data UKF and CKF are the most widely used nonlinear Bayesian estimators for solving practical applications, e.g. see~\cite{2003:LaViola,2008:Teixeira} and many others. It is justified by the third order approximation for the posterior mean under the sampled-data UKF and CKF strategies compared with the first order approximation under the EKF  methodology~\cite{2013:Sarkka:book,2001:Wan:book}. Among these two sampled-data filtering approaches, the CKF technique is a popular alternative to the UKF methodology because of square-rooting problem related to possibly negative weights in the UKF. This prevents the design of its numerically stable and reliable implementation methods; see the discussion in~\cite[Section~VII]{2010:Haykin} and~\cite{2001:Merwe}.
Additionally, the CKF is shown to be a valuable method when estimating the unknown dynamic state of nonlinear continuous-discrete stochastic systems in the presence of both Gaussian noise~\cite{2010:Haykin,2016:Kulikov:IEEE,2017:Kulikov:ANM} and non-Gaussian uncertainties including outliers~\cite{2018:Kulikov:SP}.

This paper is focused on implementation issues of the continuous-discrete CKF technique~\cite{2010:Haykin}.
Recall, the conventional CKF algorithm requires the Cholesky decomposition of the filter error covariance
for generating the cubature vectors in each iterate.
The decomposition exists and is unique when the symmetric matrix to be decomposed is positive definite~\cite{1983:Golub:book}.
This is not always the case in a finite precision arithmetics and, hence, it may result in unexpected interruption of the computations. To enhance the reliability and numerical robustness to roundoff errors, the Cholesky-based (square-root) CKF implementations have been designed for both the covariance~\cite{2009:Haykin,2010:Haykin} and information cubature filtering~\cite{2017:Arasaratnam,2013:Chandra}. The square-root (SR) approach implies one Cholesky decomposition at the initial filtering step and, next, the CKF equations are re-derived in such a way that the only Cholesky factors are propagated. This mechanization ensures the positive (semi-) definiteness and symmetric form of the resulted error covariance matrix and yields the improved numerical stability with respect to roundoff error; see more detail in~\cite[Chapter~7]{2015:Grewal:book}. The SR filters belong to the class of {\it factored-form} implementation methods~\cite{2010:Grewal}. The comprehensive overview of existing CKF implementations can be found in~\cite[Chapter~6]{2019:Chandra:book}.

Recent results in the filtering realm suggest that the singular value decomposition (SVD) utilization for generating the sigma vectors in the UKF estimators~\cite{2013:Straka,2016:Vsimandl} as well as the cubature nodes in the CKF methodology~\cite{2015:Zhang} sometimes yields a better estimation quality compared to the traditionally used Cholesky decomposition; see the cited papers and the discussion in~\cite{2015:Menegaz}. However, the mentioned filters still imply the recursive computation of the entire error covariance matrix, i.e. they are the {\it conventional}-form implementations with  a poor numerical stability in a finite precision arithmetics; e.g. see the discussion of estimation problems with the ill-conditioned measurement scenario in~\cite[Examples 7.1 and 7.2]{1969:Dyer} and many other studies. Undoubtedly, it is reasonable to design inherently stable Bayesian filters in terms of updating the SVD factors of the filter error covariance matrix. This is the challenge to be addressed in this brief paper.
Compared with the existing Cholesky-based CKF solution with the implied triangular SR matrix factors, the new SVD factorization-based approach yields a more general scenario. The SR factors obtained under the SVD strategy are full matrices that might be rectangular ones, in general. Indeed, the SVD factorization provides useful information about the involved matrix structure and this knowledge about the eigenvalues can be  intelligently used for a related low-rank approximation providing the reduced-rank filters with a decreased computational cost. Another benefit of the SVD factorization-based filtering strategy is its practical feasibility for the case of `almost exact' or `perfect' measurements, i.e. when the measurement noise covariance matrix is positive semi-definite~\cite[p.~1305]{2010:Simon}. It comes from the fact that SVD exists for any matrix that is not a case for Cholesky decomposition. Finally, {\it information}-form CKF implementations are easy to get within SVD strategy because the inverse of the involved orthogonal SVD matrix factor is its transpose one, meanwhile the diagonal SVD factor inversion requires a few scalar divisions, only. Thus, it is straightforward to design the mixed-type implementations with automatic switching between the covariance-form and information-form regimes. This feature is of special interest for practical applications where the number of states to be estimated is much less than the dimension of measurement vector $m>>n$. In this case the measurement update step in its traditional {\it covariance}-form formulation is extremely expensive because of the required $m$-by-$m$ matrix inversion, but the related {\it information}-form is simple~\cite[Section~7.7.3]{2015:Grewal:book}. Hence, a feasibility of the switching regime may be used for a computational cost reduction. A clear example of applications where information-form algorithms greatly benefit is the distributed multi-sensor architecture applications with
a centralized KF used for processing the information coming from a large number of sensors~\cite{2010:Sayed}.

Motivated by the advanced functionality of any SVD-based filter, the goal is to derive the first SVD-based CKF algorithms. By now, the existing SVD-based estimators are restricted by linear dynamic systems, only~\cite{2017:Kulikova:IET,2019:Galka,1992:Wang}.
Here, we propose two nonlinear estimators: (i) the filter based on the traditionally used Euler-Maruyama discretization scheme of order~0.5; (ii) the estimator based on advanced It\^{o}-Taylor expansion of order~1.5 for discretizing the underlying stochastic differential equations (SDEs). Both filters are formulated in terms of SVD factors of the filter covariance and belong to the class of factored-form algorithms with intrinsic numerical robustness. In our derivation we follow the {\it discrete-discrete} approach for continuous-discrete Bayesian filtering that means that the SDE discretization schemes~\cite{1999:Kloeden:book} are used first, and then the corresponding moments' equations are derived. Recall, there exists an  alternative strategy for designing the continuous-discrete nonlinear Bayesian filters. The so-called {\it continuous-discrete} approach implies the derivation of the corresponding moment differential equations first, and then the numerical integration methods derived for solving ordinary differential equations (ODE) are applied for solving the resulted moment equations; e.g. see the continuous-discrete UKF in~\cite{2007:Sarkka}, the continuous-discrete CKF in~\cite{2012:Sarkka} and the continuous-discrete EKF variants in~\cite{2016:Kulikov:IEEE,2017:Kulikov:ANM}. These two methodologies are  extensively discussed  in~\cite{2012:Frogerais,2014:Kulikov:IEEE}.
Finally, the newly-developed SVD-based CKF methods are tested on a radar tracking problem.

\section{Conventional Cubature Kalman filter} \label{problem:statement}

Consider continuous-discrete stochastic model
\begin{align}
dx(t) & = f\bigl(t,x(t)\bigr)dt + \tilde Gd\beta(t), \quad t>0,  \label{eq1.1} \\
z_k   & =  h(k,x(t_{k}))+v_k, \quad k =1,2,\ldots \label{eq1.2}
\end{align}
where  $x(t) \in \mathbb R^{n}$ is the unknown state vector to be estimated and the vector-function $f:\mathbb R\times\mathbb
R^{n}\to\mathbb R^{n} $ is the time-variant drift function. The process uncertainty is modelled by the additive noise term where
$\tilde G \in \mathbb R^{n\times n}$ is the time-invariant diffusion matrix and $\beta(t)$ is
the $n$-dimensional Brownian motion whose increment $d\beta(t)$ is independent of $x(t)$ and has the covariance $Q\,dt>0$. Finally, measurement equation~\eqref{eq1.2} implies a nonlinear relationship $h:\mathbb R\times\mathbb R^{n}\to\mathbb R^{m}$  between the unknown dynamic state $x(t_{k})$ and available data $z_k := z(t_{k})$ measured at some discrete-time points $t_k$. This measured information $z_k \in \mathbb R^{m}$ comes with the sampling rate (sampling period) $\Delta_k=t_{k}-t_{k-1}$. The measurement noise term $v_k $ is assumed to be white Gaussian noise with zero mean and known covariance $R_k>0$. The  initial state $x(t_0)$ is assumed to be normally distributed with mean $\bar x_0$ and covariance $\Pi_0>0$. Finally, the noise processes and $x(t_0)$ are all assumed to be statistically independent.

The CKF technique for stochastic systems~\eqref{eq1.1}, \eqref{eq1.2} suggested in~\cite{2010:Haykin} is developed within the {\it discrete-discrete} framework and implies the following two steps. First, the underlying SDE system in~\eqref{eq1.1} is solved. For that, each sampling interval $\Delta_k=t_{k}-t_{k-1}$ is divided into $M-1$ subintervals and the stochastic system is discretized on each subinterval for obtaining the approximated SDEs solution. Following~\cite{2010:Haykin}, the key feature of this step is the use of discretization method of order 1.5 based on It\^{o}-Taylor expansion (IT-1.5). Recall, a lower order method, which is the Euler-Maruyama discretization scheme of order 0.5 (EM-0.5), is traditionally used in engineering literature~\cite{1970:Jazwinski:book}. It is easy to integrate with the CKF methodology as well~\cite{2018:Kulikov:Romania}. Although the method yields a higher discretization error compared to the IT-1.5 scheme for the same number of subintervals utilized, its simplicity might be an attractive feature for practitioners. Thus, the EM-0.5 CKF estimator deserves some merit, and we propose our new SVD-based solution for both the IT-1.5 and EM-0.5 CKF estimators. The second step in designing the filtering method is the mean and covariance approximation. The key feature of this step is to approximate the expectations in the resulted equations with the third order cubature integration method.

We briefly explain the IT-1.5 discretization scheme utilized in~\cite{2010:Haykin}. The method is used on the equidistant mesh, i.e. the length of each sampling interval is assumed to be constant (i.e. $\Delta = \Delta_k = \Delta_{k-1} \ldots$). More precisely, each interval is divided on the pre-defined $M-1$ equally spaced subdivision nodes. Hence, the step size of the IT-1.5 scheme is $\delta=\Delta/M$ and we have $t_{k-1}^{(m)} = t_{k-1} + m \delta$, $m=0, \ldots, M-1$. The noise term in~\eqref{eq1.1} is assumed to be standard Brownian motion. If $Q \ne I$, one may define $G:=\tilde GS_{Q}$ where $S_Q$ is a square-root factor of the process covariance $Q = S_QS_Q^{\top}$.
Following~\cite{2010:Haykin}, the underlying SDEs are discretized on each $[t_{k-1}, t_k]$) as follows:
\[
x_{k-1}^{(m+1)} =f_d\bigl(t_{k-1}^{(m)},x_{k-1}^{(m)}\bigr)+G w_1 +{\mathbb L}f\bigl(t_{k-1}^{(m)},x_{k-1}^{(m)}\bigr) w_2
\]
where $w_1$ and $w_2$ are the correlated $n$-dimensional zero-mean random Gaussian variables with the properties discussed in~\cite{2010:Haykin,1999:Kloeden:book}. The discretized drift function is
\begin{align}
f_d\bigl(t_{k-1}^{(m)},x_{k-1}^{(m)}\bigr) & = x_{k-1}^{(m)}+\delta f\bigl(t_{k-1}^{(m)},x_{k-1}^{(m)}\bigr) \nonumber \\
 & + \frac{1}{2}{\delta^2}{\mathbb L}_0 f\bigl(t_{k-1}^{(m)},x_{k-1}^{(m)}\bigr) \label{fun_fd}
\end{align}
 where ${\mathbb L} f$ stands for a square matrix with $(i,j)$ entry being ${\mathbb L}_jf_i$ and the utilized differential operators ${\mathbb L}_0$ and ${\mathbb L}_j$ are defined as follows:
\begin{align}
{\mathbb L}_0 & = \frac{\partial}{\partial t} + \sum \limits_{i = 1}^{n} f_i \frac{\partial}{\partial x_i} + \frac{1}{2}\sum \limits_{j,p,r=1}^{n} G_{pj}  G_{rj} \frac{\partial^2}{\partial x_p \partial x_r}, \label{operator_L0} \\
{\mathbb L}_j &= \sum \limits_{i = 1}^{n} G_{ij} \frac{\partial}{\partial x_i}, \quad i,j = 1, 2,\ldots, n. \label{operator_Lj}
\end{align}

It is worth noting here that the IT-1.5 CKF proposed in~\cite{2010:Haykin} is valid for stochastic systems with time-invariant diffusion matrix, only. If the diffusion $G(\cdot)$ related to the standard Brownian motion $\beta(t)$ in the examined state-space model is time-variant, then the differential operators ${\mathbb L}_0$ and ${\mathbb L}_j$ have a more sophisticated expression than formulas~\eqref{operator_L0}, \eqref{operator_Lj}, i.e. they do not hold in general; more details can be found in~\cite{1999:Kloeden:book}.

Next, the moment equations are derived. The CKF technique implies computation of the first two moments~\cite{2010:Haykin}:
\begin{align*}
&\E{x_{k-1}^{(m+1)}}  =\E{f_d\bigl(t_{k-1}^{(m)},x_{k-1}^{(m)}\bigr)} \\
&\var{x_{k-1}^{(m+1)}}\!=\!{\rm \bf var}\Bigl\{\!x_{k-1}^{(m)}\!\!+\delta f\bigl(t_{k-1}^{(m)},x_{k-1}^{(m)}\bigr)\!\! \\
& +\frac{\delta^2}{2} {\mathbb L}_0 f\bigl(t_{k-1}^{(m)},x_{k-1}^{(m)}\bigr)\!\!\Bigr\} \\
& +\frac{\delta^2}{2}\left[G \;{\mathbb L} f^{\top}\!\!\bigl(t_{k-1}^{(m)},x_{k-1}^{(m)}\bigr) +{\mathbb L}  f\bigl(t_{k-1}^{(m)},x_{k-1}^{(m)}\bigr)\;G^{\top}\right]\\
&+\frac{\delta^3}{3}\!\bigl[{\mathbb L} f\bigl(t_{k-1}^{(m)},\hat x_{k-1}^{(m)}\bigr)\!\bigr]\bigl[{\mathbb L} f\bigl(t_{k-1}^{(m)},\hat x_{k-1}^{(m)}\bigr)\!\bigr]^{\top} +\delta G G^{\top}.
\end{align*}

Finally, the third-degree spherical-radial cubature rule is utilized for computing the resulted $n$-dimensional Gaussian-weighted integrals within the CKF approach. For that, the cubature nodes (vectors) are defined~\cite{2009:Haykin,2010:Haykin}:
\begin{align}
\!\!\xi_i  & = \sqrt{n} e_i, \xi_{n+i}=-\sqrt{n} e_i, &\!\!\!{\mathcal X}_{i,k|k}& = S_{P_{k|k}} \xi_i+\hat x_{k|k}  \!\!\label{cub:points}
\end{align}
where $e_i$ ($i=\overline{1,n}$) denotes the $i$-th unit coordinate vector in $\mathbb R^n$ and $n$ is the dimension of the state vector. Next, the quantity $\hat x_{k|k}$ means the state estimate at $t_k$ based on the measurement data $Z_1^k = \{z_1, z_2, \ldots, z_k \}$, i.e. $\hat x_{k|k} = \E{x_{k}|Z_1^k}$, and $\hat x_{k+1|k}$ stands for one-step-ahead predicted estimate. The related error covariance matrices are denoted as follows: $P_{k|k}=\E{ (x_{k}-\hat x_{k|k})(x_{k}-\hat x_{k|k})^{\top}}$ and $P_{k|k-1}=\E{ (x_{k}-\hat x_{k|k-1})(x_{k}-\hat x_{k|k-1})^{\top}}$. For simplicity, the following notations are used throughout the paper. The vector ${\mathbf 1}_{2n}$ is the unitary column of size $2n$, $I_{2n}$ is the identity matrix of size $2n$ and $\otimes$ is the Kronecker tensor product (function \verb"kron" in MATLAB).

The term $S_{P_{k|k}}$ in formula~\eqref{cub:points} stands for the square-root factor of matrix $P_{k|k}$, i.e. $P_{k|k} = S_{P_{k|k}}S_{P_{k|k}}^{\top}$. In the original CKF equations~\cite{2009:Haykin,2010:Haykin}, it is defined in a traditional way by using Cholesky decomposition, i.e. $P_{k|k} = P_{k|k}^{1/2}P_{k|k}^{\top/2}$ where $S_{P_{k|k}}:=P_{k|k}^{1/2}$ is a lower triangular Cholesky factor of $P_{k|k}$. However, an alternative solution can be found under singular value decomposition (SVD). More precisely, one defines $S_{P_{k|k}}:=Q_{P_{k|k}}D_{P_{k|k}}^{1/2}$ where $Q_{P_{k|k}}$ and $D_{P_{k|k}}$ are, respectively, the orthogonal and diagonal SVD factors of a symmetric error covariance matrix $P_{k|k} = Q_{P_{k|k}}D_{P_{k|k}}Q_{P_{k|k}}^{\top}$; see the conventional Algorithms~1a and 2a summarized in Appendix.

As can be seen, Algorithms~1a and 2a utilize SVD for the cubature vectors' generation, but they still propagate the entire error covariance matrices, i.e. they are numerically unstable (with respect to roundoff errors) as all {\it conventional} implementations. For numerical stability reasons, the {\it factored-form} implementations are preferable for practical filters' implementation~\cite[Chapter~7]{2015:Grewal:book}.

\section{SVD factorization-based implementations} \label{main:result}

The new IT-1.5 and EM-0.5 CKF implementations imply SVD of real matrix $A \in {\mathbb R}^{m\times n}$ where $m<n$. Hence, each iteration  of the filtering method includes the rank-$r$ decomposition of matrix $A \in {\mathbb R}^{m\times n}$ that is for $m<n$ and $r<m$ is given by~\cite[Theorem~1.1.6]{2015:Bjorck:book}:
$ A  = W\Sigma V^{\top}, \,
\Sigma =
\begin{bmatrix}
S & 0
\end{bmatrix} \in {\mathbb R}^{m\times n},  \; S={\rm diag}\{ \sigma_1,\ldots,\sigma_r\}
$ where $W \in {\mathbb R}^{m\times m}$, $V \in {\mathbb R}^{n\times n}$ are orthogonal matrices, and $\sigma_1\geq \ldots \geq\sigma_r>0$ are
the singular values of $A$.

It is well known that under the Cholesky-based filtering approach, an orthogonal transformation is used for computing the required Cholesky factorization of a positive definite matrix $C = A^{\top}A+B^{\top}B$. Evidently, the SVD-based solution may not be grounded in the $QR$ factorization utilized within the Cholesky approach because it does not provide the SVD factors requested in the calculation. Therefore, one needs to find a proper way for computing the mentioned SVD factors of the covariance matrix, which has the form $C = A^{\top}A+B^{\top}B$.

For readers' convenience, we utilize the MATLAB built-in function notation \verb"svd" with option \verb"econ" while presenting our new SVD-based CKF methods.  If $m<n$ (this is our case), it performs the reduced SVD variant and returns $W \in {\mathbb R}^{m\times m}$, diagonal $S \in {\mathbb R}^{m\times m}$ and only the first $m$ columns of matrix $V \in {\mathbb R}^{n\times n}$. In fact, the matrix $V$ is of no interest because of a symmetric form and positive definiteness of any covariance matrix that yields only two SVD factors. Note that the output matrices $W$ and $S$ of the same size $m$. If $r = rank(A) < m$, then some diagonal entries of $S \in {\mathbb R}^{m\times m}$ are zero. This can be taken into account for an efficient implementation of the algorithms. Additionally, it paves a way for a reduced-rank filtering strategy. Thus, we formulate the first IT-1.5 CKF estimator within SVD methodology.


\begin{codebox}
\Procname{{\bf Algorithm~1b}. $\proc{IT-1.5 SVD-CKF}$}
\zi \textsc{Initialization:} ($k=0$) Perform $\Pi_0 = Q_{\Pi_0}D_{\Pi_0}Q_{\Pi_0}^{\top}$;
\li \>Set $\hat x_{0|0} = \bar x_0$ and $Q_{P_{0|0}} = Q_{\Pi_0}$, $D^{1/2}_{P_{0|0}} = D^{1/2}_{\Pi_0}$;
\li \>Generate $\xi_i = \sqrt{n} e_i$, $\xi_{n+i}= -\sqrt{n} e_i$, ($i=\overline{1,n}$).
\zi \textsc{Time Update}: ($k=\overline{1,K}$) \Comment{\small\textsc{Priori estimation}}
\li Set $\hat x_{k-1|k-1}^{(0)}\!\!:=\hat x_{k-1|k-1}$, $Q_{P_{k-1|k-1}}^{(0)}\!:=Q_{P_{k-1|k-1}}$
\zi \phantom{Set} and $D_{P_{k-1|k-1}}^{(0),1/2}:=D^{1/2}_{P_{k-1|k-1}}$ at $t_{k-1}$; \label{formuls:start}
\li For $m=0,\ldots,M-1$ do \Comment{\small $t_{k-1}^{(m)} = t_{k-1}\!+m\delta$, $i=\overline{1,2n}$}
\li \>Set factor $S_{P_{k-1|k-1}}^{(m)}\!:=Q^{(m)}_{P_{k-1|k-1}}D^{(m),1/2}_{P_{k-1|k-1}}$;
\li \>Generate ${\mathcal X}^{(m)}_{i,k-1|k-1}\!\!=S_{P_{k-1|k-1}}^{(m)}\xi_i+\hat x^{(m)}_{k-1|k-1}$;
\li \>Propagate ${\mathcal X}_{i,k-1|k-1}^{(m+1)}=f_d\bigl(t_{k-1}^{(m)},{\mathcal X}_{i,k-1|k-1}^{(m)}\bigr)$;
\li \>Collect ${\mathcal X}^{(m+1)}_{k-1|k-1}=\bigl[{\mathcal X}^{(m+1)}_{1,k-1|k-1},\ldots,{\mathcal X}^{(m+1)}_{2n,k-1|k-1}\bigr]$;
\li \>Find the estimate $\hat x_{k-1|k-1}^{(m+1)}=\frac{1}{2n} {\mathcal X}^{(m+1)}_{k-1|k-1} {\mathbf 1}_{2n}$;
\li \>Set ${\mathbb X}^{(m+1)}_{k-1|k-1}=\frac{1}{\sqrt{2n}}\bigl({\mathcal X}^{(m+1)}_{k-1|k-1}\!-{\mathbf
1}_{2n}^{\top}\otimes \hat x_{k-1|k-1}^{(m+1)}\bigr)$;
\li \>Find ${\mathbb L}f_{k-1}^{(m)} = {\mathbb L}f(t_{k-1}^{(m)},\hat x_{k-1|k-1}^{(m)})$ and collect \label{svd:it:p1:predict}
\zi \>$T = \left[{\mathbb X}^{(m+1)}_{k-1|k-1}, \;\; \sqrt{\delta}(G+\frac{\delta}{2}{\mathbb L}f_{k-1}^{(m)}),
\; \; \sqrt{\frac{\delta^3}{12}}{\mathbb L}f_{k-1}^{(m)}\right]$;
\li \>Perform $[Q_{P_{k-1|k-1}}^{(m+1)}, D^{(m+1),1/2}_{P_{k-1|k-1}}] \leftarrow \verb"svd"(T,econ)$; \label{svd:it:p2:predict}
\li Set $\hat x_{k|k-1}\!:=\hat x_{k-1|k-1}^{(M)}$, $Q_{P_{k|k-1}}\!:=Q_{P_{k-1|k-1}}^{(M)}$
\zi \phantom{Set} and the diagonal part $D^{1/2}_{P_{k|k-1}}\!:=D_{P_{k-1|k-1}}^{(M),1/2}$.
\zi \textsc{Measurement Update}: \Comment{\small\textsc{Posteriori
estimation}}
\li \>Set square-root factor $S_{P_{k|k-1}}\!:=\!Q_{P_{k|k-1}}D_{P_{k|k-1}}^{1/2}$; \label{svd:it:MU:start}
\li \>Generate ${\mathcal X}_{i,k|k-1}=S_{P_{k|k-1}}\xi_i+\hat x_{k|k-1}$;
\li \>Propagate ${\mathcal Z}_{i,k|k-1}=h\bigl(k,{\mathcal X}_{i,k|k-1}\bigr)$;
\li \>Collect matrix ${\mathcal Z}_{k|k-1}=\bigl[{\mathcal
Z}_{1,k|k-1},\ldots,{\mathcal Z}_{2n,k|k-1} \bigr]$;
\li \>Compute  $\hat z_{k|k-1}=\frac{1}{2n}{\mathcal Z}_{k|k-1} {\mathbf 1}_{2n}$;
\li \>Collect matrix ${\mathcal X}_{k|k-1}=\bigl[{\mathcal
X}_{1,k|k-1},\ldots,{\mathcal X}_{2n,k|k-1}\bigr]$;
\li \>Set ${\mathbb X}_{k|k-1}=\frac{1}{\sqrt{2n}}\bigl({\mathcal X}_{k|k-1}\!-{\mathbf
1}_{2n}^{\top}\otimes \hat x_{k|k-1}\bigr)$;
\li \>Set ${\mathbb Z}_{k|k-1}=\frac{1}{\sqrt{2n}}\bigl({\mathcal Z}_{k|k-1}\!-{\mathbf
1}_{2n}^{\top}\otimes \hat z_{k|k-1}\bigr)$;
\li \>From pre-array $A = \left[{\mathbb Z}_{k|k-1}, \;\; Q_{R_k}D_{R_k}^{1/2}\right]$; \label{svd:it:rek1}
\li \>Read-off factors $[Q_{R_{e,k}}, D^{1/2}_{R_{e,k}}] \leftarrow \verb"svd"(A,econ)$; \label{svd:it:rek2}
\li \>Compute cross-covariance $P_{xz,k}={\mathbb X}_{k|k-1}{\mathbb Z}_{k|k-1}^{\top}$;
\li \>Find cubature gain ${\mathbb K}_{k}=P_{xz,k}Q_{R_{e,k}}D_{R_{e,k}}^{-1}Q_{R_{e,k}}^{\top}$; \label{svd:it:gain}
\li \>Update estimate $\hat x_{k|k}=\hat x_{k|k-1}+{\mathbb K}_k(z_k-\hat z_{k|k-1})$;
\li \>Collect $B = \left[{\mathbb X}_{k|k-1} - {\mathbb K}_{k}{\mathbb Z}_{k|k-1}, \;\; {\mathbb K}_{k}Q_{R_k}D_{R_k}^{1/2}\right]$; \label{svd:it:f:p1}
\li \>Read-off factors $[Q_{P_{k|k}}, D^{1/2}_{P_{k|k}}] \leftarrow \verb"svd"(B,econ)$; \label{svd:it:f:p2}
\end{codebox}

We observe that the filter covariances are SVD-factorized in Algorithm~1b once. Next, the involved SVD factors are recursively propagated and updated instead of entire matrices $P_{k|k-1}$ and $P_{k|k}$. Thus, Algorithm~1b is of {\it factored-form} (SR) form.

\begin{Prop} \label{proposition:1}
The SVD-based IT-1.5 CKF equations in Algorithm~1b are algebraically equivalent to the conventional  IT-1.5 CKF formulas in Algorithm~1a.
\end{Prop}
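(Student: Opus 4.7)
The plan is to establish algebraic equivalence by showing that each SVD call in Algorithm~1b produces factors whose outer product reconstructs precisely the covariance matrix computed explicitly in Algorithm~1a at the same stage. The central algebraic fact I would use repeatedly is that, for any pre-array $A\in{\mathbb R}^{m\times n}$ with $m<n$, the reduced SVD $A=W[S,\,0]V^{\top}$ satisfies $AA^{\top}=WS^{2}W^{\top}$; hence whenever a covariance $C$ can be written as $C=AA^{\top}$, the \verb"econ" SVD of $A$ immediately yields the desired SVD factors of $C$ as $Q_{C}=W$ and $D_{C}^{1/2}=S$. Since the mean-propagation lines of Algorithm~1b are identical to those of Algorithm~1a by inspection, the whole proof reduces to verifying that each of the three pre-arrays $T$, $A$, and $B$ satisfies this outer-product identity.

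First I would handle the time-update sub-step at lines~\ref{svd:it:p1:predict}--\ref{svd:it:p2:predict}. Writing $TT^{\top}$ block-wise, the ${\mathbb X}^{(m+1)}_{k-1|k-1}({\mathbb X}^{(m+1)}_{k-1|k-1})^{\top}$ block reproduces the third-degree cubature sample variance of $x_{k-1}^{(m)}+\delta f+\tfrac{\delta^{2}}{2}{\mathbb L}_{0}f$, while the remaining two blocks contribute $\delta GG^{\top}+\tfrac{\delta^{2}}{2}(G\,{\mathbb L}f^{\top}+{\mathbb L}f\,G^{\top})+\bigl(\tfrac{\delta^{3}}{4}+\tfrac{\delta^{3}}{12}\bigr){\mathbb L}f({\mathbb L}f)^{\top}$. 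Since $\tfrac{\delta^{3}}{4}+\tfrac{\delta^{3}}{12}=\tfrac{\delta^{3}}{3}$, the sum reproduces, term by term, the closed-form expression for $\var{x_{k-1}^{(m+1)}}$ stated in Section~\ref{problem:statement}, so $TT^{\top}=P^{(m+1)}_{k-1|k-1}$ and the factors produced on line~\ref{svd:it:p2:predict} are exactly the SVD factors propagated by Algorithm~1a.

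Next I would handle the measurement update. A direct multiplication for the pre-array on line~\ref{svd:it:rek1} gives $AA^{\top}={\mathbb Z}_{k|k-1}{\mathbb Z}_{k|k-1}^{\top}+Q_{R_{k}}D_{R_{k}}Q_{R_{k}}^{\top}=P_{zz,k}+R_{k}=R_{e,k}$, the innovation covariance. Hence $Q_{R_{e,k}}D_{R_{e,k}}Q_{R_{e,k}}^{\top}$ coincides with the spectral factorization of $R_{e,k}$, and the gain on line~\ref{svd:it:gain} reads ${\mathbb K}_{k}=P_{xz,k}R_{e,k}^{-1}$, matching Algorithm~1a. For the final pre-array $B$ on line~\ref{svd:it:f:p1}, expanding $BB^{\top}$ yields the Joseph-stabilized form $P_{k|k-1}-{\mathbb K}_{k}P_{zx,k}-P_{xz,k}{\mathbb K}_{k}^{\top}+{\mathbb K}_{k}R_{e,k}{\mathbb K}_{k}^{\top}$. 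Substituting ${\mathbb K}_{k}R_{e,k}=P_{xz,k}$ collapses this to $P_{k|k-1}-P_{xz,k}R_{e,k}^{-1}P_{zx,k}=P_{k|k}$, so the factors from line~\ref{svd:it:f:p2} are indeed the SVD factors of the conventional posterior covariance.

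The main obstacle I anticipate is not the covariance bookkeeping but the administrative check that the \verb"econ" option is applicable at each call, i.e.\ that the relevant pre-array has strictly more columns than rows (or that the reduced factorization is in any case well defined and returns $Q$ of the correct size), together with the careful tracking of the convention $D^{1/2}_{(\cdot)}=S$ so that squaring and inversion are done consistently in lines~\ref{svd:it:gain}--\ref{svd:it:f:p2}. Once those housekeeping details are in place, equivalence follows at once from the identity $AA^{\top}=WS^{2}W^{\top}$ applied at lines~\ref{svd:it:p2:predict}, \ref{svd:it:rek2}, and~\ref{svd:it:f:p2}, completing the proof.
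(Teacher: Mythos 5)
Your proposal is correct and follows essentially the same route as the paper's own proof: both rest on the identity $AA^{\top}=WS^{2}W^{\top}$ applied to the same three pre-arrays $T$, $A$, $B$, with the posterior covariance handled via the Joseph-type symmetric form and the substitution ${\mathbb K}_{k}R_{e,k}=P_{xz,k}$. The only (welcome) difference is that you explicitly verify the block products of $TT^{\top}$, including the coefficient check $\tfrac{\delta^{3}}{4}+\tfrac{\delta^{3}}{12}=\tfrac{\delta^{3}}{3}$, where the paper simply asserts the factorization.
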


\begin{proof} Consider the equation in line~\ref{con:it:p1:predict} of Algorithm~1a for computing $P_{k-1|k-1}^{(m+1)}$. The aim is to derive its alternative variant that recursively propagate the SVD factors of $P_{k-1|k-1}^{(m+1)}$. For that, we assume that the SVD factors at the current step  $Q_{P_{k-1|k-1}}^{(m)}$ and $D^{(m),1/2}_{P_{k-1|k-1}}$ are known. We need to find a way to propagate them according to the discussed equation, which we re-arrange
\begin{align}
& P_{k-1|k-1}^{(m+1)}  = \left[{\mathbb X}^{(m+1)}_{k-1|k-1}, \; \sqrt{\delta}(G+\frac{\delta}{2}{\mathbb L}f_{k-1}^{(m)}), \; \sqrt{\frac{\delta^3}{12}}{\mathbb L}f_{k-1}^{(m)}\right] \nonumber \\
& \times \left[{\mathbb X}^{(m+1)}_{k-1|k-1}, \; \sqrt{\delta}(G+\frac{\delta}{2}{\mathbb L}f_{k-1}^{(m)}), \; \sqrt{\frac{\delta^3}{12}}{\mathbb L}f_{k-1}^{(m)}\right]^{\top} \label{eq:proof:1}
\end{align}
where ${\mathbb L}f_{k-1}^{(m)} = {\mathbb L}f(t_{k-1}^{(m)},\hat x_{k-1|k-1}^{(m)})$. Denote the generalized SR matrix factor of $P_{k-1|k-1}^{(m+1)}$ appeared in~\eqref{eq:proof:1} as
\begin{equation}
T:= \left[{\mathbb X}^{(m+1)}_{k-1|k-1}, \sqrt{\delta}(G+\frac{\delta}{2}{\mathbb L}f_{k-1}^{(m)}), \; \sqrt{\frac{\delta^3}{12}}{\mathbb L}f_{k-1}^{(m)}\right], \label{eq:proof:2}
\end{equation}
i.e. $P_{k-1|k-1}^{(m+1)}=TT^{\top}$ is a generalized SR matrix representation. Our goal is to transform the generalized SR factor $T$ in~\eqref{eq:proof:2} into the SVD-based SR factors, i.e. into the orthogonal factors
$W$, $V$ and the diagonal factor $S$. In order to do so, SVD should be used, i.e. $T=WSV^{\top}$. Thus, we have the equality between two SR-type representations for $P_{k-1|k-1}^{(m+1)}=TT^{\top}$ that is
\begin{equation}
TT^{\top} = WSV^{\top}(WSV^{\top})^{\top} = WS^2W^{\top} \label{eq:proof:3}
\end{equation}
where $T$ is defined in~\eqref{eq:proof:2} through known previous values $Q_{P_{k-1|k-1}}^{(m)}$ and $D^{(m),1/2}_{P_{k-1|k-1}}$, but the orthogonal $W$ and diagonal $S$ should be determined. Recall $TT^{\top} = P_{k-1|k-1}^{(m+1)}$ and, hence, from equality~\eqref{eq:proof:3} we conclude that $W$ is, in fact, the orthogonal SVD factor of  $P_{k-1|k-1}^{(m+1)}$, i.e. $W:=Q_{P_{k-1|k-1}}^{(m+1)}$. Similarly, we define $S:=D^{(m+1),1/2}_{P_{k-1|k-1}}$. This yields the required recursion where the propagated SVD-based SR factors $Q_{P_{k-1|k-1}}^{(m+1)}$ and $D^{(m+1),1/2}_{P_{k-1|k-1}}$ are simply read-off from the resulted post-arrays after SVD in line~\ref{svd:it:p2:predict} of Algorithm~1b. This justifies the time update.

Similarly, we represent the residual covariance equation appeared in Algorithm~1a in a symmetric SR form, i.e.
\begin{align}
R_{e,k} & = {\mathbb Z}_{k|k-1}{\mathbb Z}_{k|k-1}^{\top}+R_k \nonumber \\
& = \left[{\mathbb Z}_{k|k-1}, Q_{R_k}D_{R_k}^{1/2}\right]\left[{\mathbb Z}_{k|k-1}, Q_{R_k}D_{R_k}^{1/2}\right]^{\top}.
\label{eq:proof:4}
\end{align}

Again, our goal is to get the SVD-based SR factors $Q_{R_{e,k}}$ and $D_{R_{e,k}}$ instead of the generalized SR factor
\begin{equation}
A:=\left[{\mathbb Z}_{k|k-1}, \quad Q_{R_k}D_{R_k}^{1/2}\right].
\label{eq:proof:5}
\end{equation}
Having applied the SVD factorization to the pre-array $A$ in~\eqref{eq:proof:5} and taking into account the equality between two SR representations in~\eqref{eq:proof:3}, we conclude that $W:=Q_{R_{e,k}}$ and $S:=D^{1/2}_{R_{e,k}}$. This gives us the computational scheme summarized in lines~\ref{svd:it:rek1}, \ref{svd:it:rek2} of Algorithm~1b for calculating the SVD factors  $Q_{R_{e,k}}$ and $D_{R_{e,k}}$ instead of the full matrix  $R_{e,k}$ utilized in the conventional Algorithm~1a.

Having computed the SVD factors of $R_{e,k}$ and taking into account the properties of orthogonal matrices, the cubature gain is calculated in a more robust way as
\[
{\mathbb K}_{k} =P_{xz,k}R_{e,k}^{-1} = P_{xz,k}Q_{R_{e,k}}D_{R_{e,k}}^{-1}Q_{R_{e,k}}^{\top}.
\]
Indeed, the full matrix inversion operation $R_{e,k}^{-1}$ required in the conventional Algorithm~1a is replaced by the diagonal matrix inversion only, i.e. $D_{R_{e,k}}^{-1}$. This explains the improved numerical robustness of the new SVD-based CKF methods, which we discuss in the next section.

Finally, we derive a symmetric equation for $P_{k|k}$. From ${\mathbb K}_{k}=P_{xz,k}R_{e,k}^{-1}$, we get
${\mathbb K}_{k}P_{xz,k}^{\top}={\mathbb K}_{k}R_{e,k}{\mathbb K}_{k}^{\top}$ and
\begin{equation}
P_{k|k}= P_{k|k-1} - {\mathbb K}_k R_{e,k} {\mathbb K}_k^{\top}  = P_{k|k-1} - P_{xz,k}{\mathbb K}_k^{\top}. \label{eq:proof1}
\end{equation}
From $P_{xz,k}={\mathbb X}_{k|k-1}{\mathbb Z}_{k|k-1}^{\top}$ and $P_{k|k-1} = {\mathbb X}_{k|k-1}{\mathbb X}_{k|k-1}^{\top}$, after
summation~\eqref{eq:proof1} with ${\mathbb K}_{k}P_{xz,k}^{\top}={\mathbb K}_{k}R_{e,k}{\mathbb K}_{k}^{\top}$, get
\begin{align*}
&P_{k|k} = P_{k|k-1} - P_{xz,k}{\mathbb K}_k^{\top} + {\mathbb K}_{k}R_{e,k}{\mathbb K}_{k}^{\top}  - {\mathbb K}_{k}P_{xz,k}^{\top} \\
& =  {\mathbb X}_{k|k-1}{\mathbb X}_{k|k-1}^{\top} - {\mathbb X}_{k|k-1}{\mathbb Z}_{k|k-1}^{\top}{\mathbb K}_k^{\top} - {\mathbb K}_{k}{\mathbb Z}_{k|k-1}{\mathbb X}^{\top}_{k|k-1} \\
& \phantom{=} + {\mathbb K}_{k}({\mathbb Z}_{k|k-1}{\mathbb Z}_{k|k-1}^{\top}+R_k){\mathbb K}_{k}^{\top}  = {\mathbb K}_{k}R_k {\mathbb K}_{k}^{\top} \\
& \phantom{=} + \left[{\mathbb X}_{k|k-1}-{\mathbb K}_{k}{\mathbb Z}_{k|k-1}\right]\left[{\mathbb X}_{k|k-1}-{\mathbb K}_{k}{\mathbb Z}_{k|k-1}\right]^{\top}.
\end{align*}
The resulted symmetric formula allows for the generalized SR factorization of the error covariance $P_{k|k}$ as
\begin{align*}
BB^{\top} & = \left[{\mathbb X}_{k|k-1} - {\mathbb K}_{k}{\mathbb Z}_{k|k-1}, \; {\mathbb K}_{k}Q_{R_k}D_{R_k}^{1/2}\right]\\
 & \times \left[{\mathbb X}_{k|k-1} - {\mathbb K}_{k}{\mathbb Z}_{k|k-1}, \; {\mathbb K}_{k}Q_{R_k}D_{R_k}^{1/2}\right]^{\top} = P_{k|k}.
\end{align*}
Similarly to our previous derivation, we SVD factorize the pre-array $B$ in order to obtain the related SVD factors of the filter covariance
$Q_{P_{k|k}}$ and $D_{P_{k|k}}$. This justifies the computations in lines~\ref{svd:it:f:p1}, \ref{svd:it:f:p2} of Algorithm~1b and completes the proof.
\end{proof}


\begin{codebox}
\Procname{{\bf Algorithm~2b}. $\proc{EM-0.5 SVD-CKF}$}
\zi \textsc{Initialization:} ($k=0$) Repeat Algorithm~1b.
\zi \textsc{Time Update}: ($k=\overline{1,K}$) \Comment{\small\textsc{Priori estimation}}
\li Set $\hat x_{k-1|k-1}^{(0)}\!\!:=\hat x_{k-1|k-1}$, $Q_{P_{k-1|k-1}}^{(0)}\!\!:=Q_{P_{k-1|k-1}}$
\zi \phantom{Set} and $D_{P_{k-1|k-1}}^{(0),1/2}:=D^{1/2}_{P_{k-1|k-1}}$ at $t_{k-1}$;
\li For $m=0,\ldots,M-1$ do \Comment{\small $t_{k-1}^{(m)} = t_{k-1}\!+m\delta$}
\li \>Set $S_{P_{k-1|k-1}}^{(m)}\!:=Q^{(m)}_{P_{k-1|k-1}}D^{(m),1/2}_{P_{k-1|k-1}}$;
\li \>Generate ${\mathcal X}^{(m)}_{i,k-1|k-1}\!\!=S_{P_{k-1|k-1}}^{(m)}\xi_i+\hat x^{(m)}_{k-1|k-1}$;
\li \>${\mathcal X}_{i,k-1|k-1}^{(m+1)}={\mathcal X}_{i,k-1|k-1}^{(m)} + \delta f\bigl(t_{k-1}^{(m)},{\mathcal X}_{i,k-1|k-1}^{(m)}\bigr)$;
\li \>Set ${\mathcal X}^{(m+1)}_{k-1|k-1}=\bigl[{\mathcal X}^{(m+1)}_{1,k-1|k-1},\ldots,{\mathcal X}^{(m+1)}_{2n,k-1|k-1}\bigr]$;
\li \>Compute $\hat x_{k-1|k-1}^{(m+1)}=\frac{1}{2n} {\mathcal X}^{(m+1)}_{k-1|k-1} {\mathbf 1}_{2n}$;
\li \>${\mathbb X}^{(m+1)}_{k-1|k-1}=\frac{1}{\sqrt{2n}}\bigl({\mathcal X}^{(m+1)}_{k-1|k-1}\!\!-{\mathbf
1}_{2n}^{\top}\!\otimes \hat x_{k-1|k-1}^{(m+1)}\bigr)$;
\li \>Pre-array $T = \left[{\mathbb X}^{(m+1)}_{k-1|k-1}, \;\; \sqrt{\delta}\tilde GQ_{Q}D^{1/2}_{Q}\right]$; \label{svd:em:p1:predict}
\li \>$[Q_{P_{k-1|k-1}}^{(m+1)}, D^{(m+1),1/2}_{P_{k-1|k-1}}]\leftarrow \verb"svd"(T,econ)$; \label{svd:em:p2:predict}
\li Set $\hat x_{k|k-1}\!:=\hat x_{k-1|k-1}^{(M)}$, $Q_{P_{k|k-1}}\!:=Q_{P_{k-1|k-1}}^{(M)}$
\zi \phantom{Set} and the diagonal part $D^{1/2}_{P_{k|k-1}}\!:=D_{P_{k-1|k-1}}^{(M),1/2}$.
\zi \textsc{Measurement Update}: Repeat lines~\ref{svd:it:MU:start}-\ref{svd:it:f:p2}
\zi \>of Algorithm~1b (the IT-1.5 SVD-CKF).
\end{codebox}


\begin{Prop} \label{proposition:2}
The SVD-based EM-0.5 CKF equations in Algorithm~2b are algebraically equivalent to the conventional EM-0.5 CKF formulas in Algorithm~2a.
\end{Prop}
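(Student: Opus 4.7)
The plan is to mimic the argument of Proposition~\ref{proposition:1} almost verbatim, exploiting the fact that Algorithm~2b differs from Algorithm~1b only in the single inner loop of the time update; the measurement update block (lines~\ref{svd:it:MU:start}--\ref{svd:it:f:p2} of Algorithm~1b) is recycled without modification. Consequently, for the measurement update part the equivalence is inherited directly from Proposition~\ref{proposition:1}, and all that remains is to justify the new EM-0.5 time-update pre-array in lines~\ref{svd:em:p1:predict}--\ref{svd:em:p2:predict} of Algorithm~2b.

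For that inner time-update step I would first write down, from the Euler--Maruyama moment equations associated with~\eqref{eq1.1} on the subinterval $[t_{k-1}^{(m)},t_{k-1}^{(m+1)}]$, the conventional covariance recursion used in Algorithm~2a, namely
\[
P_{k-1|k-1}^{(m+1)} = {\mathbb X}^{(m+1)}_{k-1|k-1}\bigl({\mathbb X}^{(m+1)}_{k-1|k-1}\bigr)^{\top} + \delta\,\tilde G Q \tilde G^{\top},
\]
where the first term is the cubature-based sample covariance after propagation of the nodes through $x + \delta f(t,x)$, and the second term is the discretized process-noise contribution. Using the SVD factorization $Q = Q_{Q}D_{Q}Q_{Q}^{\top}$ of the (time-invariant) process-noise covariance, the noise term factors as $(\sqrt{\delta}\tilde G Q_{Q}D_{Q}^{1/2})(\sqrt{\delta}\tilde G Q_{Q}D_{Q}^{1/2})^{\top}$, which gives the generalized SR representation $P_{k-1|k-1}^{(m+1)}=TT^{\top}$ with the pre-array $T$ exactly as defined in line~\ref{svd:em:p1:predict} of Algorithm~2b.

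The final step is the same algebraic trick used in Proposition~\ref{proposition:1}: applying SVD to the pre-array yields $T = W S V^{\top}$, and the identity
\[
T T^{\top} = W S V^{\top}(W S V^{\top})^{\top} = W S^{2} W^{\top}
\]
forces the orthogonal factor $W$ to coincide with $Q_{P_{k-1|k-1}}^{(m+1)}$ and the diagonal factor $S$ to coincide with $D^{(m+1),1/2}_{P_{k-1|k-1}}$. This is exactly the read-off prescription of line~\ref{svd:em:p2:predict} of Algorithm~2b and establishes the algebraic equivalence of the EM-0.5 time-update loop with its conventional counterpart.

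I do not anticipate a real obstacle: since the EM-0.5 scheme has a simpler drift-plus-noise structure than IT-1.5 (no $\mathbb{L}f$ terms, no mixed $\delta^{2}$ and $\delta^{3}$ contributions), the pre-array in~Algorithm~2b has only two blocks rather than three, and the SVD/SR argument is strictly a special case of the one carried out in the proof of Proposition~\ref{proposition:1}. The only point worth stating explicitly is that, because the measurement update in Algorithm~2b is borrowed verbatim from Algorithm~1b, no separate derivation of the SVD factors of $R_{e,k}$, of the gain ${\mathbb K}_k$, or of $P_{k|k}$ is required, and the proof terminates once the EM-0.5 time-update equivalence has been verified.
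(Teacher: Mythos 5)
Your proposal is correct and follows essentially the same route as the paper: the paper's proof of Proposition~\ref{proposition:2} likewise reduces to factorizing the conventional recursion $P_{k-1|k-1}^{(m+1)} = {\mathbb X}^{(m+1)}_{k-1|k-1}\bigl({\mathbb X}^{(m+1)}_{k-1|k-1}\bigr)^{\top} + \delta\,\tilde G Q \tilde G^{\top}$ into the two-block pre-array $T=\bigl[{\mathbb X}^{(m+1)}_{k-1|k-1},\;\sqrt{\delta}\tilde G Q_{Q}D^{1/2}_{Q}\bigr]$ and invoking the SVD read-off argument already established in Proposition~\ref{proposition:1}, with the measurement update inherited verbatim. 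Your write-up is in fact slightly more explicit than the paper's one-line proof, but the content is identical.
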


\begin{proof} Lines~\ref{svd:em:p1:predict}, \ref{svd:em:p2:predict} of Algorithm~2b are justified by factorizing the equation in line~\ref{con:em:p1:predict} of Algorithm~2a as
\begin{align*}
P_{k-1|k-1}^{(m+1)} & = \left[{\mathbb X}^{(m+1)}_{k-1|k-1}, \sqrt{\delta}\tilde GQ_{Q}D^{1/2}_{Q}\right] \\
& \times \left[{\mathbb X}^{(m+1)}_{k-1|k-1}, \sqrt{\delta}\tilde GQ_{Q}D^{1/2}_{Q}\right]^{\top}.
\end{align*}
This completes the proof.
\end{proof}

\begin{table*}
\caption{The $\mbox{\rm ARMSE}_p$ observed in the target tracking scenario with  the increasingly ill-conditioned measurement scheme} \label{Tab:1}
\centering
\begin{tabular}{c||c|c|c|c||c|c|c|c}
\hline
&  \multicolumn{4}{c||}{IT-1.5 discretization-based CKFs ($64$ subintervals)} &  \multicolumn{4}{c}{EM-0.5 discretization-based CKFs ($512$ subintervals)} \\
\cline{2-9}
$\delta$ & \texttt{original}~\cite{2010:Haykin} & \texttt{Alg.1a} & \texttt{Cholesky}~\cite{2010:Haykin} &  \texttt{SVD Alg.1b} & \texttt{standard} & \texttt{Alg.2a} & \texttt{Cholesky}~\cite{2018:Kulikov:Romania}  & \texttt{SVD Alg.2b} \\
\hline
1.0e-01 & 6.014e+00 & 6.018e+00 & 6.014e+00 & 6.018e+00 & 1.325e+02 & 1.325e+02 & 1.325e+02 & 1.325e+02 \\
1.0e-02 & 6.011e+00 & 6.011e+00 & 6.011e+00 & 6.011e+00 & 1.324e+02 & 1.324e+02 & 1.324e+02 & 1.324e+02 \\
1.0e-03 & 6.010e+00 & 6.009e+00 & 6.010e+00 & 6.009e+00 & 1.324e+02 & 1.324e+02 & 1.324e+02 & 1.324e+02 \\
1.0e-04 & ---       & 6.009e+00 & 6.010e+00 & 6.009e+00 & 1.323e+02 & 1.323e+02 & 1.323e+02 & 1.323e+02 \\
1.0e-05 & ---       & 6.009e+00 & 6.010e+00 & 6.009e+00 & ---       & 1.323e+02 & 1.323e+02 & 1.323e+02 \\
1.0e-06 & ---       & 6.009e+00 & 6.010e+00 & 6.009e+00 & ---       & 1.323e+02 & 1.323e+02 & 1.323e+02 \\
1.0e-07 & ---       & ---       & 6.010e+00 & 6.009e+00 & ---       & ---       & 1.323e+02 & 1.323e+02 \\
1.0e-08 & ---       & ---       & 6.010e+00 & 6.009e+00 & ---       & ---       & 1.323e+02 & 1.323e+02 \\
1.0e-09 & ---       & ---       & 6.010e+00 & 6.009e+00 & ---       & ---       & 1.323e+02 & 1.323e+02 \\
1.0e-10 & ---       & ---       & 6.010e+00 & 6.009e+00 & ---       & ---       & 1.324e+02 & 1.324e+02 \\
1.0e-11 & ---       & ---       & 6.020e+00 & 6.021e+00 & ---       & ---       & 1.323e+02 & 1.323e+02 \\
1.0e-12 & ---       & ---       & 7.925e+00 & 7.926e+00 & ---       & ---       & 1.313e+02 & 1.313e+02 \\
1.0e-13 & ---       & ---       & 8.335e+00 & 8.334e+00 & ---       & ---       & 1.219e+02 & 1.220e+02 \\
1.0e-14 & ---       & ---       & 1.111e+01 & 1.128e+01 & ---       & ---       & 1.188e+02 & 1.182e+02 \\
\hline
\end{tabular}
\end{table*}

\begin{remark}
If stochastic system~\eqref{eq1.1}, \eqref{eq1.2} is parameterized, then the recent theoretical result in~\cite[Lemma~1]{2017:Tsyganova:IEEE} applied to Algorithms~1b and~2b yields their SVD-based extended ``differentiated'' version, which can be used for simultaneous hidden dynamic state and unknown system parameters estimation by gradient-based CKF adaptive schemes. A similar adaptive CKF method within the Cholesky approach has been proposed in~\cite{2018:Boureghda}.
\end{remark}

\section{Numerical Experiments} \label{numerical:experiments}

For a comparative study of the newly-devised SVD-based CKF methods with the previously published conventional and Cholesky-based CKF algorithms, we consider the radar tracking problem from~\cite[Sec.~VIII]{2010:Haykin} but with incorporated ill-conditioned measurement scheme artificially simulated as discussed in~\cite[Examples 7.1]{1969:Dyer} and~\cite[Section~7.2.2]{2015:Grewal:book} for provoking numerical instabilities due to roundoff in the state-space model at hand.

\begin{exmp} When performing a coordinated turn, the aircraft's dynamics obeys~\eqref{eq1.1} with the drift function
$f(\cdot)=\left[\dot{\epsilon}, -\omega \dot{\eta}, \dot{\eta}, \omega \dot{\epsilon}, \dot{\zeta},  0, 0\right]$ and the {\it standard} Brownian motion, i.e. $Q=I$ with $G={\rm diag}\left[0,\sigma_1,0,\sigma_1,0,\sigma_1,\sigma_2\right]$, $\sigma_1=\sqrt{0.2}$, $\sigma_2=0.007$.
The state consists of three positions, corresponding velocities and the turn rate, that is $x(t)= [\epsilon, \dot{\epsilon}, \eta, \dot{\eta}, \zeta, \dot{\zeta}, \omega]^{\top}$. The initial values are $\bar x_0=[1000\,\mbox{\rm m}, 0\,\mbox{\rm m/s}, 2650\,\mbox{\rm m},150\,\mbox{\rm m/s}, 200\,\mbox{\rm m}, 0\,\mbox{\rm m/s},\omega^\circ/\mbox{\rm s}]^{\top}$ and $\Pi_0=\mbox{\rm diag}(0.01\,I_7)$. The state is observed through
\begin{align*}
z_k & =
\begin{bmatrix}
1 & 1 & 1 & 1 & 1 &  1 &  1\\
1 & 1 & 1 & 1 & 1 &  1 &  1 +\delta
\end{bmatrix}
x_k +
\begin{bmatrix}
v_k^1 \\
v_k^2
\end{bmatrix}, \; R_k=\delta^{2}I_2
\end{align*}
where parameter $\delta$ is used for simulating roundoff effect.
This increasingly ill-conditioned target tracking scenario assumes that $\delta\to 0$, i.e. $\delta=10^{-1},10^{-2},\ldots,10^{-14}$.
\end{exmp}

When the ill-conditioning parameter $\delta$ tends to a machine precision limit, i.e. $\delta \to \epsilon_{roundoff}$, the measurement noise covariance $R_k$ becomes an ill-conditioned matrix and it is, in fact, close to zero, which implies the scenario of `almost exact' or `perfect' measurements.
A singular noise covariance does not, in general, present theoretical problems. However,  from a numerical point of view, this situation is highly unpleasant because the singular noise covariance increases the possibility of numerical problems such as the ill-conditioning of the
innovations covariance; see~\cite[p.~249]{1979:Maybeck:book}, \cite[p.~365]{1994:Stengel:book}, \cite[p.~1305]{2010:Simon}. To get some insights of the internal numerical calculations involved in the discussed state estimation scenario, we follow the error model derivation in~\cite{1986:Verhaegen}. In the cited paper, it is proved that the key parameter, which influences the error propagation in the {\it covariance}-form filtering algorithms, is the condition number of the innovations covariance matrix $R_{e,k}$. Example~1 is designed in such a way that although the matrix $H_k$ is of full rank for any value of $\delta$ in use, the matrix $R_{e,k}$ becomes ill-conditioned when $\delta \to \epsilon_{roundoff}$ and $R_k$ tends to a singular (and even zero) matrix~\cite[Section~7.2.2]{2015:Grewal:book}. In other words, Example~1 illustrates how the given well-posed problem is made to be ill-conditioned one by the filter implementation due to roundoff errors. More precisely, the accumulated roundoff error, which is negligible in well-posed state estimation scenarios, may blow up and affect severely the accuracy and performance of the filters applied in ill-conditioned situations.

In our numerical experiments, we solve a filtering problem on interval $[0s, 150s]$ with sampling period $\Delta = \Delta_k = 1s$ by eight CKF implementations listed in Table~1 for various ill-conditioned scenarios. The turn rate is set to $\omega=3^\circ/\mbox{\rm s}$. All filtering methods are tested at the same conditions, i.e. with the same simulated `true' state trajectory, the same measurement data and the same initial conditions. We perform $100$ Monte Carlo simulations in order to compute the root mean square error in position $\mbox{\rm ARMSE}_p$; see the formulas in~\cite{2010:Haykin}. Finally, in contrast to~\cite{2010:Haykin}, we test the EM-0.5 CKF implementations and IT-1.5 CKF methods separately, because their integrated numerical schemes yield different discretization errors. Thus, the first panel in Table~1 summarizes the IT-1.5 discretization-based CKF results while the second panel contains the EM-0.5-based filters' outcomes.

Having analyzed the data in Table~1, we make a few important conclusions. First, when the problem is well-conditioned, i.e. when $\delta$ is large, we observe the identical performance of the conventional implementations and their related square-root variants. This is a consequence of the algebraic equivalence between the SVD-based Algorithms~1b,~2b and their conventional variants in Algorithms~1a,~2a proved in this paper. The same conclusion holds for the original CKF implementation and its Cholesky-based method designed in~\cite{2010:Haykin}. We emphasize that despite the estimation error in the EM-0.5 CKF methods is higher than that in the IT-1.5 CKF algorithms all these work with no failure in our increasingly ill-conditioned target tracking scenario.

Next, having compared the first column to the second one in each panel of Table~1, we observe the difference of two {\it conventional} CKF implementations with different ways of the cubature node generation. As can be seen, the original CKF grounded in the Cholesky-based generation is the most vulnerable algorithm in Example~1. More precisely, it will produce the decent state estimates if the target tracking scenario at hand is rather well-conditioned, i.e. when $\delta \ge 10^{-3}$, but it fails at $\delta=10^{-4}$. In this case, the roundoff errors destroy the positivity of the filtering covariance matrix, that fails immediately the Cholesky decomposition. On the other hand, its replacement with the SVD-type factorization improves obviously the reliability of the conventional CKF. Indeed, Algorithms~1a and~2a are both successful and accurate when  $\delta \ge 10^{-6}$, but these do not suit for treating severely ill-conditioned problems and fail when at $\delta=10^{-7}$.

Eventually, we conclude that (i) the new square-root CKF algorithms derived under the SVD methodology outperform their related conventional CKF variants, and (ii) the numerical stability of two approaches, which are Cholesky- and SVD-based  methods, are similar because these are both square-root solutions and, hence, enjoy the similar numerical robustness to roundoff. We observe that their divergence speed due to roundoff is the same in the ill-conditioned stochastic system in use and these are more numerically stable in comparison to the conventional CKF. On the other hand, the SVD-based CKF methods can be preferable in practice because of their extended functionality and useful features mentioned in Section~\ref{introduction}. Thus, the newly-suggested SVD-based CKF implementation approach provides practitioners with a diversity of methods giving a fair possibility for choosing any of them depending on a real-world application at hand, its complexity and accuracy requirements.

\section*{Acknowledgments}
The authors acknowledge the financial support of the Portuguese FCT~--- \emph{Funda\c{c}\~ao para a Ci\^encia e a Tecnologia}, through the projects UIDB/04621/2020 and UIDP/04621/2020 of CEMAT/IST-ID, Center for Computational and Stochastic Mathematics, Instituto Superior T\'ecnico, University of Lisbon.

\appendix{\bf Appendix. Conventional CKF implementations}

\begin{codebox}
\Procname{{\bf Algorithm~1a}. $\proc{IT-1.5 CKF}$}
\zi \textsc{Initialization:} ($k=0$) Set $\hat x_{0|0} = \bar x_0$, $P_{0|0} = \Pi_0$;
\li \>Generate $\xi_i = \sqrt{n} e_i$, $\xi_{n+i}= -\sqrt{n} e_i$, ($i=\overline{1,n}$).
\zi \textsc{Time Update}: ($k=\overline{1,K}$) \Comment{\small\textsc{Priori estimation}}
\li Set $\hat x_{k-1|k-1}^{(0)}:=\hat x_{k-1|k-1}$ and $P_{k-1|k-1}^{(0)}:=P_{k-1|k-1}$;  \label{formuls:start}
\li For $m=0,\ldots,M-1$ do \Comment{\small $t_{k-1}^{(m)} = t_{k-1}\!+m\delta$, $[t_{k-1}, t_k]$}
\li \>SVD: $P_{k-1|k-1}^{(m)}= Q^{(m)}_{P_{k-1|k-1}}\!\!\!\!D^{(m)}_{P_{k-1|k-1}}[Q^{(m)}_{P_{k-1|k-1}}]^{\top}$;
\li \>Set $S_{P_{k-1|k-1}}^{(m)}\!\!:=\!Q^{(m)}_{P_{k-1|k-1}}\!\!\!\![D^{(m)}_{P_{k-1|k-1}}]^{1/2}$;
\li \>${\mathcal X}^{(m)}_{i,k-1|k-1}\!\!=S_{P_{k-1|k-1}}^{(m)}\!\xi_i+\hat x^{(m)}_{k-1|k-1}$, ($i=\overline{1,2n}$);
\li \>${\mathcal X}_{i,k-1|k-1}^{(m+1)}=f_d\bigl(t_{k-1}^{(m)},{\mathcal X}_{i,k-1|k-1}^{(m)}\bigr)$, ($i=\overline{1,2n}$);
\li \>Collect ${\mathcal X}^{(m+1)}_{k-1|k-1}=\bigl[{\mathcal X}^{(m+1)}_{1,k-1|k-1},\ldots,{\mathcal X}^{(m+1)}_{2n,k-1|k-1}\bigr]$;
\li \>Find the estimate $\hat x_{k-1|k-1}^{(m+1)}=\frac{1}{2n} {\mathcal X}^{(m+1)}_{k-1|k-1} {\mathbf 1}_{2n}$;
\li \>Set ${\mathbb X}^{(m+1)}_{k-1|k-1}=\frac{1}{\sqrt{2n}}\bigl({\mathcal X}^{(m+1)}_{k-1|k-1}\!\!-{\mathbf
1}_{2n}^{\top}\!\otimes \hat x_{k-1|k-1}^{(m+1)}\bigr)$;
\li \>Find $P_{k-1|k-1}^{(m+1)} = {\mathbb X}^{(m+1)}_{k-1|k-1} \left[{\mathbb X}^{(m+1)}_{k-1|k-1}\right]^{\top}$ \label{con:it:p1:predict}
\zi \>       $\quad +\frac{\delta^2}{2}\! \!\left[G{\mathbb L}f^{\top}\bigl(t_{k-1}^{(m)},\hat x_{k-1|k-1}^{(m)}\bigr)\right.$
\zi \>       $\quad \left.+{\mathbb L} f\bigl(t_{k-1}^{(m)},\hat x_{k-1|k-1}^{(m)}\bigr)G^{\top}\right]  + \delta G G^{\top}$
\zi \>       $\quad +\frac{\delta^3}{3}\!\bigl[{\mathbb L} f\bigl(t_{k-1}^{(m)},\hat x_{k-1|k-1}^{(m)}\bigr)\!\bigr]\bigl[{\mathbb L} f\bigl(t_{k-1}^{(m)},
\hat x_{k-1|k-1}^{(m)}\bigr)\!\bigr]^{\top}$;
\li Set $\hat x_{k|k-1}: =\hat x_{k-1|k-1}^{(M)}$ and $P_{k|k-1}:=P_{k-1|k-1}^{(M)}$.
\zi \textsc{Measurement Update}: \Comment{\small\textsc{Posteriori
estimation}}
\li \>Apply SVD: $P_{k|k-1}=Q_{P_{k|k-1}}D_{P_{k|k-1}}Q_{P_{k|k-1}}^{\top}$; \label{con:it:MU:start}
\li \>Set $S_{P_{k|k-1}}\!:=\!Q_{P_{k|k-1}}D_{P_{k|k-1}}^{1/2}$;
\li \>Get ${\mathcal X}_{i,k|k-1}=S_{P_{k|k-1}}\xi_i+\hat x_{k|k-1}$, ($i=\overline{1,2n}$);
\li \>Propagate ${\mathcal Z}_{i,k|k-1}=h\bigl(k,{\mathcal X}_{i,k|k-1}\bigr)$, ($i=\overline{1,2n}$);
\li \>Collect  ${\mathcal Z}_{k|k-1}=\bigl[{\mathcal
Z}_{1,k|k-1},\ldots,{\mathcal Z}_{2n,k|k-1} \bigr]$;
\li \>Compute the predicted $\hat z_{k|k-1}=\frac{1}{2n}{\mathcal Z}_{k|k-1} {\mathbf 1}_{2n}$;
\li \>Collect ${\mathcal X}_{k|k-1}=\bigl[{\mathcal
X}_{1,k|k-1},\ldots,{\mathcal X}_{2n,k|k-1}\bigr]$;
\li \>Set ${\mathbb X}_{k|k-1}=\frac{1}{\sqrt{2n}}\bigl({\mathcal X}_{k|k-1}\!-{\mathbf
1}_{2n}^{\top}\otimes \hat x_{k|k-1}\bigr)$;
\li \>Set ${\mathbb Z}_{k|k-1}=\frac{1}{\sqrt{2n}}\bigl({\mathcal Z}_{k|k-1}\!-{\mathbf
1}_{2n}^{\top}\otimes \hat z_{k|k-1}\bigr)$;
\li \>Compute $R_{e,k}={\mathbb Z}_{k|k-1}{\mathbb Z}_{k|k-1}^{\top}+R_k$; \label{con:it:rek}
\li \>Find cross-covariance $P_{xz,k}={\mathbb X}_{k|k-1}{\mathbb Z}_{k|k-1}^{\top}$;
\li \>Calculate cubature gain ${\mathbb K}_{k}=P_{xz,k}R_{e,k}^{-1}$; \label{con:it:gain}
\li \>Update $\hat x_{k|k}=\hat x_{k|k-1}+{\mathbb K}_k(z_k-\hat z_{k|k-1})$;
\li \>Update $P_{k|k}=P_{k|k-1} - {\mathbb K}_k R_{e,k} {\mathbb K}_k^{\top}$. \label{con:it:f:p}
\end{codebox}

\begin{codebox}
\Procname{{\bf Algorithm~2a}. $\proc{EM-0.5 CKF}$}
\zi \textsc{Initialization:} ($k=0$) Set $\hat x_{0|0} = \bar x_0$, $P_{0|0} = \Pi_0$;
\li \>Generate $\xi_i = \sqrt{n} e_i$, $\xi_{n+i}= -\sqrt{n} e_i$, ($i=\overline{1,n}$).
\zi \textsc{Time Update}: ($k=\overline{1,K}$) \Comment{\small\textsc{Priori estimation}}
\li Set $\hat x_{k-1|k-1}^{(0)}:=\hat x_{k-1|k-1}$ and $P_{k-1|k-1}^{(0)}:=P_{k-1|k-1}$;
\li For $m=0,\ldots,M-1$ do \Comment{\small $t_{k-1}^{(m)} = t_{k-1}\!+m\delta$, $[t_{k-1}, t_k]$}
\li \>SVD: $P_{k-1|k-1}^{(m)}= Q^{(m)}_{P_{k-1|k-1}}\!\!\!\!D^{(m)}_{P_{k-1|k-1}}[Q^{(m)}_{P_{k-1|k-1}}]^{\top}$;
\li \>Set $S_{P_{k-1|k-1}}^{(m)}\!\!:=\!Q^{(m)}_{P_{k-1|k-1}}\!\!\!\![D^{(m)}_{P_{k-1|k-1}}]^{1/2}$;
\li \>${\mathcal X}^{(m)}_{i,k-1|k-1}\!\!=S_{P_{k-1|k-1}}^{(m)}\!\xi_i+\hat x^{(m)}_{k-1|k-1}$, ($i=\overline{1,2n}$);
\li \>${\mathcal X}_{i,k-1|k-1}^{(m+1)}={\mathcal X}_{i,k-1|k-1}^{(m)} + \delta f\bigl(t_{k-1}^{(m)},{\mathcal X}_{i,k-1|k-1}^{(m)}\bigr)$; \label{formuls:start}
\li \>${\mathcal X}^{(m+1)}_{k-1|k-1}=\bigl[{\mathcal X}^{(m+1)}_{1,k-1|k-1},\ldots,{\mathcal X}^{(m+1)}_{2n,k-1|k-1}\bigr]$;
\li \>Compute $\hat x_{k-1|k-1}^{(m+1)}=\frac{1}{2n} {\mathcal X}^{(m+1)}_{k-1|k-1}{\mathbf 1}_{2n}$;
\li \>${\mathbb X}^{(m+1)}_{k-1|k-1}=\frac{1}{\sqrt{2n}}\bigl({\mathcal X}^{(m+1)}_{k-1|k-1}\!\!-{\mathbf
1}_{2n}^{\top}\!\otimes \hat x_{k-1|k-1}^{(m+1)}\bigr)$;
\li \>$P_{k-1|k-1}^{(m+1)} = {\mathbb X}^{(m+1)}_{k-1|k-1} \left[{\mathbb X}^{(m+1)}_{k-1|k-1}\right]^{\top}\!+ \delta \tilde GQ\tilde G^{\top}$; \label{con:em:p1:predict}
\li Set $\hat x_{k|k-1}: =\hat x_{k-1|k-1}^{(M)}$ and $P_{k|k-1}:=P_{k-1|k-1}^{(M)}$.
\zi \textsc{Measurement Update}: Repeat lines~\ref{con:it:MU:start}-\ref{con:it:f:p}
\zi \>of Algorithm~1a (the IT-1.5 CKF).
\end{codebox}


\end{document}